\theoremstyle{plain}
\newtheorem{theorem}{Theorem}[section]
\newtheorem{corollary}[theorem]{Corollary}
\newtheorem{lemma}[theorem]{Lemma}
\newtheorem{claim}{Claim}[section]
\newtheorem{proposition}[theorem]{Proposition}
\newtheorem*{thm_uten_num}{Theorem}
\theoremstyle{definition}
\newtheorem{definition}{Definition}[section]
\newtheorem{remark}{Remark}[section]
\newtheorem{example}{Example}[section]
\numberwithin{equation}{section}
\newcommand{\bfc}{\mathbf{c}}
\newcommand{\bff}{\mathbf{f}}
\newcommand{\bfq}{\mathbf{q}}
\newcommand{\Hv}{\mathcal{H}v}
\newcommand{\cH}{\mathcal{H}}
\newcommand{\cI}{\mathcal{I}}
\newcommand{\dd}{\mathrm{\,d}}
\newcommand{\mR}{\mathbb{R}}
\newcommand{\mE}{\mathbb{E}}
\DeclareMathOperator{\tr}{tr}
\DeclareMathOperator{\diag}{diag}
\DeclareMathOperator{\sgn}{sgn}
\DeclareMathOperator{\spa}{span}
\begin{document}

\author{Karl K. Brustad\\ {\small Norwegian University of Science and Technology}}
\title{Segre's Theorem. An analytic proof of a result in differential geometry }

\maketitle

\begin{abstract}
We present an analytic approach on how to solve the problem $|\nabla u|=f(u)$, $\Delta u = g(u)$, in connected domains $\Omega\subseteq\mathbb{R}^n$.
\end{abstract}

\section{Introduction}

The study of \emph{isoparametric families of hypersurfaces} -- a term used by Levi-Civita in a paper published in 1937 -- traces back to the early 1900s and a problem in geometric optics. The following situation was considered by Laura and Somigliana. Suppose that $u$ satisfies the wave equation in $\mR^3$ and that for any fixed time, the level sets $\{u = c\}$ are parallel. Laura had discovered that the possible level surfaces are strongly restricted, and Somigliana then solved the problem completely by proving that they are either parallel planes, concentric spheres, or coaxial cylinders.
The consequence is that wave fronts are parallel only in very special cases. Considering the simple conditions, the proof of this seemingly simple result, is fascinatingly rich and complex. The problem becomes even more challenging when generalized to $\mR^n$, and this will be the topic of our paper.

The details, and more on the history and the modern development of the subject, can be found in the survey \cite{Thorbergsson2000963} by Thorbergsson. We only mention that the result in space forms with constant negative curvature is more or less the same as for Euclidean space. On the other hand, in $\mathbb{S}^n$ the problem is significantly harder. See 
\cite{MR2999038} and \cite{MR3735906}.

The word ``parallel'' can be replaced with the condition that $|\nabla u|$ is constant on the level sets.
A smooth function \(u\colon M\to\mathbb{R}\) on a Riemannian manifold \(M\) is therefore called \emph{isoparametric} if there are functions \(f\) and \(g\) so that
\begin{equation}
|\nabla u| = f(u)\qquad\text{and}\qquad \Delta u = g(u).
\label{eq:isopar}
\end{equation}
A regular level set of an isoparametric function is called an \emph{isoparametric hypersurface}.

On a general manifold, the gradient $\nabla u$ and the Laplacian $\Delta u$ has to be interpreted in the proper way as the \emph{first}, and \emph{second differential parameter}. However, in this paper we shall only be concerned with the standard interpretations in $\mR^n$.

According to \cite{Thorbergsson2000963}, the classification of isoparametric hypersurfaces in Euclidean space $\mR^n$ was first done by Segre in 1938. He effectively shows that the result of Somigliana still holds true for $n>3$.

\begin{thm_uten_num}[Segre]
	A connected isoparametric hypersurface in \(\mathbb{R}^n\) is, upon scaling and an Euclidean motion, an open part of one of the following hypersurfaces:
	\begin{enumerate}[a)]
		\item a hyperplane \(\mathbb{R}^{n-1}\),
		\item a sphere \(\mathbb{S}^{n-1}\),
		\item a generalized cylinder \(\mathbb{S}^{k-1}\times\mathbb{R}^{n-k}\), \(k = 2,\dots,n-1\).
	\end{enumerate}
\end{thm_uten_num}

The purpose of our paper is to give a detailed, self-contained, and \emph{analytic} proof of this result.
We shall examine the regularity assumptions on $u$, $f$, and $g$ closely, and state the conclusion globally by giving an explicit formula for $u\colon\Omega\to\mR$ in connected domains $\Omega\subseteq\mR^n$. It is perhaps surprising that $u$ does not depend on $g$. This means that $f$ and $g$ are not independent and there will be no solution to the equations \eqref{eq:isopar} unless there is a constitutional relation in the data.
We feel that several of these issues have not been completely setteled in the literature.

In the definitions above, one considers only \emph{regular} level sets. That is, it is assumed that $\nabla u\neq 0$ or, equivalently, $f>0$. We shall not relax on this condition, and it should then be noted that the theorem excludes the smooth isoparametric function $x\mapsto |x|^2$ in any domain $\Omega\subseteq\mR^n$ containing the origin.

As a new feature, it is shown that the theorem also holds in the modern \emph{viscosity} sense when \eqref{eq:isopar} is interpreted as a system  of elliptic PDE's. Some care has to be taken in the setup because there is a subtle difference between the two equations $|\nabla u| - f(u) = 0$ and $f(u) - |\nabla u| = 0$ from the viscosity point of view. It turns out that the variant $\sgn(g(u))\big(f(u) - |\nabla u|\big) = 0$ works when we define $\sgn(t) = 1$ for $t\geq 0$ and $\sgn(t) = -1$ otherwise. For the definitions and the elementary theory of viscosity solutions, we refer to \cite{MR2084272}.

The theorem has two conclusions depending on whether the 1-Laplacian
\[\Delta_1 u := \frac{\Delta u - \Delta_\infty^N u}{|\nabla u|}\]
vanishes or not at some point in $\Omega$. Here, $\Delta_\infty^N u := |\nabla u|^{-2}\nabla u\cH u\nabla u^T$ is the normalized infinity-Laplacian. The 1-Laplacian measures the mean curvature of the level sets of $u$. In the first case we show that $u$ is a function only of $q^Tx$. The immediate consequence is Part a) of Segre's Theorem above. The second case corresponds to the Parts b) and c) as we show that $u$ is a function only of $|R_0(x-x_*)|$. Here we must have $x_*\in\mR^n\setminus\Omega$, so this case is not possible if one starts with $\Omega = \mR^n$. The quantity $R_0$ is a symmetric $n\times n$ projection matrix with rank, or \emph{dimension}, $\tr R_0 = k$ for some $2\leq k\leq n$. The class of symmetric projection matrices will be central in this paper and we shall use the notation
\[Pr(n) := \left\{R\in S(n)\,|\, RR=R\right\}.\]
It consists of the $k$-dimensional subclasses
\[Pr_k(n) := \left\{R\in S(n)\,|\, RR=R,\,\tr R = k\right\},\qquad k=0,\dots,n.\]
Note that $Pr_n(n) = \{I\}$ which means that $u$ is a translated radial function when $k=n$.

\begin{thm_uten_num}
Let $\Omega\subseteq\mR^n$, $n\geq 2$, be open and connected. Suppose there is a function $u\in C^2(\Omega)$ and two one-variable functions $f>0$ and $g$ such that
\begin{equation}
	\begin{cases}
	\lvert \nabla u\rvert = f(u),\\
	\Delta u = g(u),
	\end{cases}\qquad\text{in $\Omega$.}
	\label{eq:mainsyss}
	\end{equation}
Then $f\in C^1(\cI)$ and $g\in C(\cI)$ where $\cI := u(\Omega)\subseteq\mR$ is an open interval.

If $\Delta_1u(x_0) = 0$ at some point $x_0\in\Omega$, then
\[g(t) = f(t)f'(t)\]
and
\[u(x) = U(q^T(x-x_0))\]
for some unit length constant $q\in\mR^n$ and $U$ is the inverse of the function $F\colon\cI\to\mR$ given by
\begin{equation}
F(t) := \int_{u(x_0)}^t\frac{\dd s}{f(s)}.
\label{eq:F}
\end{equation}

If $C_1 := \Delta_1u(x_0) > 0$ at some point $x_0\in\Omega$, then
there is an integer $2\leq k\leq n$ so that
\begin{equation}
g(t) = 
f(t)\left(f'(t) + \frac{k-1}{F_k(t)}\right)
\label{eq:const_rel}
\end{equation}
where
\[
F_k(t) := \frac{k-1}{C_1} + F(t) > 0.
\]
Moreover,
\begin{equation}
u(x) = U_k(|R_0(x-x_*)|)
\label{eq:}
\end{equation}
for some constant projection $R_0\in Pr_k(n)$ and $U_k$ is the inverse of $F_k$. Finally, $x_*$ is a point in $\mR^n$ such that $R_0(x-x_*)\neq 0$ for $x\in\Omega$.
\end{thm_uten_num}

If it happens that your isoparametric function $u$ has a negative 1-Laplacian, apply the theorem to $-u$.

We shall on several occasions exploit the fact that the problem is \emph{geometric}. Meaning that, if $u$ satisfies \eqref{eq:mainsyss}, then $w := G\circ u$ will again be isoparametric for every invertible $C^2$ function $G$. In fact, most of the proof will be carried out by considering the function $v := F\circ u$ with $F$ given by \eqref{eq:F}. This change of dependent variables is not essential for the proof, but many calculations simplifies as the gradient of $v$ has constant length one. By choosing $G$ so that $w := G\circ u$ is harmonic, we are able to show that the Theorem holds in the weaker viscosity interpretation of the equations. The price to pay is that the continuity of $f$ and $g$ has to be assumed.

\begin{proposition}[Segre's theorem in the viscosity sense]
Let $\Omega\subseteq\mR^n$ be open. If $u$ is a viscosity solution to the system
	\begin{equation}
	\begin{cases}
	\sgn(g(u))\big(f(u) - |\nabla u|\big) = 0,\\
	\Delta u - g(u) = 0,
	\end{cases}\qquad\text{in $\Omega$,}
	\end{equation}\label{eq:mainsyss_visc}
where $f>0$ and $g$ are continuous, then $u\in C^2(\Omega)$ and \eqref{eq:mainsyss} holds pointwise.
\end{proposition}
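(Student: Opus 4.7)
The idea is to exploit the geometric invariance noted just before the statement: replace $u$ by $w := G(u)$, with $G \in C^2$ chosen so that $w$ ought to be harmonic. Pick $t_0\in\cI$ and set
\[
G'(t) := \exp\!\left(-\int_{t_0}^{t}\frac{g(s)}{f(s)^2}\,\dd s\right),
\]
so $G' > 0$, $G'' = -(g/f^2)\,G'$ is continuous, and the chain rule forces $\Delta(G(u)) = G'(u)g(u) + G''(u)f(u)^2 = 0$ whenever \eqref{eq:mainsyss} holds classically. The plan is to deduce from the viscosity assumptions alone that $w$ is weakly harmonic; Weyl's lemma then upgrades $w$ to $C^\infty$, and since $G^{-1} \in C^2$, this delivers $u = G^{-1}(w) \in C^2(\Omega)$.

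First I would use only the Laplace equation. Continuity of $u$ and $g$ makes $h(x) := g(u(x))$ continuous, so $u$ is a viscosity solution of the linear Poisson equation $\Delta u = h$. In any ball $B\Subset\Omega$, solve the classical Dirichlet problem $\Delta v = h$ in $B$ with $v = u$ on $\partial B$; Calderón--Zygmund theory gives $v \in W^{2,p}(B)\cap C^{1,\alpha}(B)$ for every $p<\infty$, $\alpha<1$, and viscosity comparison applied to $u-v$ (a viscosity-harmonic function vanishing on $\partial B$) forces $u\equiv v$. Hence $u \in W^{2,p}_{\mathrm{loc}}\cap C^{1,\alpha}_{\mathrm{loc}}$ with $\Delta u = g(u)$ a.e. Now the first equation can be exploited pointwise. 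By Calderón's theorem $u$ is twice differentiable at almost every $x_0\in\Omega$, and at any such point the test functions $u(x_0) + \nabla u(x_0)\cdot(x-x_0) + \tfrac12(x-x_0)^T(D^2u(x_0)\pm MI)(x-x_0)$ for $M$ large lie above and below $u$ near $x_0$. The subjet and superjet conditions from the first viscosity equation therefore both give $\sgn(g(u(x_0)))\bigl(f(u(x_0)) - |\nabla u(x_0)|\bigr) = 0$, so $|\nabla u(x_0)| = f(u(x_0))$ because $f>0$. Continuity of both sides extends the identity to every $x \in \Omega$.

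Armed with $u \in W^{2,p}_{\mathrm{loc}}$ and $G \in C^2$, the Sobolev chain rule gives $w = G(u) \in W^{2,p}_{\mathrm{loc}}$ with
\[
\Delta w = G'(u)\Delta u + G''(u)|\nabla u|^2 = G'(u)g(u) + G''(u)f(u)^2 = 0 \qquad \text{a.e.,}
\]
by the construction of $G$. Therefore $w$ is weakly, hence (by Weyl's lemma) strongly, harmonic; in particular $w \in C^\infty(\Omega)$. Since $G^{-1}\in C^2$, we obtain $u = G^{-1}(w) \in C^2(\Omega)$. The pointwise identities in \eqref{eq:mainsyss} then follow: $|\nabla u| = f(u)$ was already pointwise, and $\Delta u = g(u)$ holds a.e.\ between continuous functions, hence everywhere. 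The main obstacle is the opening elliptic bootstrap---producing $u \in W^{2,p}_{\mathrm{loc}}$ from the viscosity Laplace equation---after which the rest reduces to a chain-rule calculation and Weyl's lemma; the awkward $\sgn(g(u))$ factor never has to do any real work, since it is only activated once $u$ is already known to be twice differentiable almost everywhere.
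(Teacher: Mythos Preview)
Your argument is correct and takes a genuinely different route from the paper. The paper stays entirely at the viscosity level: for a $C^2$ test function $\psi$ touching $w=G(u)$ from below it sets $\phi:=G^{-1}(\psi)$, applies both viscosity inequalities to $\phi$, and computes $\Delta\psi\le 0$ directly. In that computation the $\sgn(g(u))$ factor is essential: touching from below gives $\sgn(g(\phi))\big(f(\phi)-|\nabla\phi|\big)\le 0$, which after multiplication by $|g(\phi)|\big(f(\phi)+|\nabla\phi|\big)$ becomes $-g(\phi)|\nabla\phi|^2\le -g(\phi)f^2(\phi)$, precisely what is needed to control the $G''(\phi)|\nabla\phi|^2$ term whose sign flips with $g$. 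You instead bootstrap $u$ into $W^{2,p}_{\mathrm{loc}}$ from the Poisson equation alone, recover $|\nabla u|=f(u)$ at a.e.\ points of twice differentiability (where sub- and superjet inequalities collapse to an equality, making the $\sgn$ factor irrelevant), and then invoke the Sobolev chain rule and Weyl's lemma. Your approach explains why the sign device is unnecessary once one has pointwise equality, at the cost of heavier external input (Calder\'on--Zygmund estimates, a.e.\ second-order differentiability of $W^{2,p}$ functions). One small gap in the write-up: the step ``$u-v$ is viscosity-harmonic'' presumes $v\in C^2$, which fails for merely continuous right-hand side; the conclusion $u\equiv v$ is still true but should be justified via the standard equivalence of viscosity and $W^{2,p}$-strong solutions for linear equations, or by approximating $h=g(u)$ uniformly by smooth data and passing to the limit.
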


\begin{proof}
For some constants $c_0,c_1$, define the function
\[G(t) := \int_{c_1}^t \exp\left(-\int_{c_0}^\tau \frac{g(s)}{f^{2}(s)}\dd s\right)\dd \tau.\]
Note that $G$ is $C^2$ and that $G'>0$ and $G'' = - G'\frac{g}{f^2}$. Thus $G$ has a $C^2$ inverse, say, $H$, and
$G$ is a solution to the ODE
\begin{equation}
G''(t)f^2(t) + G'(t)g(t) = 0.
\label{eq:G_ode}
\end{equation}

The function $w(x) := G(u(x))$ is continuous in $\Omega$ since the viscosity solution $u$ is continuous by definition. Let $\psi$ be a test function touching $w$ from below at some point $x_0\in\Omega$. The test function $\phi(x) := H(\psi(x))$ touches $u$ from below at $x_0$ since $H$ is increasing. Thus $\Delta\phi(x_0) - g(\phi(x_0)) \leq 0$ and
\[\sgn(g(\phi(x_0)))\big(f(\phi(x_0)) - |\nabla \phi(x_0)|\big) \leq 0.\]
Multiplying the above with the non-negative numbers $|g(\phi(x_0))|$ and $f(\phi(x_0)) + |\nabla \phi(x_0)|$ does not affect the sign, and we conclude that
\[-g(\phi)|\nabla \phi|^2 \leq -g(\phi)f^2(\phi)\]
at $x_0$.
Moreover, $\psi(x) = G(\phi(x))$ and, at $x_0$,
\begin{align*}
\Delta\psi
	&= G''(\phi)|\nabla\phi|^2 + G'(\phi)\Delta\phi\\
	&= G'(\phi)\left(-\frac{g(\phi)}{f^2(\phi)}|\nabla\phi|^2 + \Delta\phi\right)\\
	&\leq G'(\phi)\left(-\frac{g(\phi)}{f^2(\phi)}f^2(\phi) + g(\phi)\right) = 0
\end{align*}
since $G'> 0$. Likewise, if $\psi$ touches $w$ from above, then $\Delta \psi(x_0)\geq 0$ and we have shown that $w$ is a viscosity solution to the equation $\Delta w = 0$. It is known that the viscosity solutions of the Laplace equation are the ordinary harmonic functions. Thus, $w$ is smooth and it follows that $u = H(w)$ is $C^2$ in $\Omega$ and that the equations \eqref{eq:mainsyss} hold pointwise as $\sgn(g)\neq 0$.
\end{proof}

The task is to prove the theorem for $u\in C^2(\Omega)$ and where \eqref{eq:mainsyss} holds pointwise. There are no regularity assumptions on $f$ and $g$, but the simple argument below shows that $f$ is $C^1$. We shall later see that also $g$ is continuous.

After some preliminary results, the proof of the theorem is conducted in two major steps. In Section 3 it is shown that, not only their sum but, each eigenvalue of the Hessian matrix $\cH v$ of $v$ is a function only of $v$ itself. In the language of differential geometry, this corresponds to constant principal curvatures on the level set hypersurface. Our proof is based on the standard ideas as found in \cite{MR3408101} or \cite{MR901710} except that we at some point invoke the \emph{Vandermonde matrix} in order to manage without references to external results concerning the \emph{Bell polynomials} and the \emph{Newton identities} in symmetric polynomials. In order to obtain a global result, as well as to provide the sufficient regularity needed in the next step, we show that $v$ is real-analytic in $\Omega$.

Section 4 constitutes the most challenging part of the proof. Here we show that the number of different eigenvalues of $\cH v$ is at most two. By the complexity of the calculations, one may easily appreciate why some twenty years passed by before Somigliana's result in space was generalized to $\mR^n$. The isoparametric function has to be differentiated four times. In order to keep track and minimize the use of indexes, we introduce a notation for matrix derivatives. This also allows us to exploit the power of linear algebra.
The \emph{principal directions} in the level sets correspond to the eigenvectors of $\cH v$. The existence of differentiable eigenvectors is a nontrivial question. It is addressed, for example, in the book \cite{MR1335452} but only in the case $\Omega \subseteq\mathbb{C}$. We circumvent this problem by doing the calculations directly on the \emph{eigenprojections} $P_j\colon\Omega\to Pr(n)$.
The eigenprojection $P_j(x)$, corresponding to the eigenvalue $\lambda_j(x)$ of $\cH v$ at $x\in\Omega$, is the unique symmetric $n\times n$ projection matrix satisfying
\[\cH v(x)P_j(x) = \lambda_j(x)P_j(x)\]
with dimension equal to the multiplicity of the eigenvalue.

\begin{proposition}\label{prop:fder}
Let $\Omega\subseteq\mR^n$ be open and connected and let $u\in C^2(\Omega)$. If there is a function $f>0$ so that $|\nabla u| = f(u)$ in $\Omega$, then $f\in C^1(\cI)$ where $\cI := u(\Omega)$ is an open interval.
\end{proposition}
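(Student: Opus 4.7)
The plan is to extract the regularity of $f$ from the regularity of $u$ by pulling back along an integral curve of $\nabla u$, which straightens $u$ into a strictly monotone function of one variable.

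First I would verify that $\cI = u(\Omega)$ is an open interval. Since $\Omega$ is connected and $u$ is continuous, $\cI$ is a connected subset of $\mR$, hence an interval. The hypothesis $f>0$ forces $\nabla u\neq 0$ everywhere in $\Omega$, so $u$ is a submersion and therefore an open map; thus $\cI$ is open.

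Next, fix $t_0\in\cI$ and pick any $x_0\in\Omega$ with $u(x_0)=t_0$. Because $u\in C^2$, the vector field $\nabla u$ is $C^1$, so the initial value problem $\gamma'(s) = \nabla u(\gamma(s))$, $\gamma(0)=x_0$, admits a unique $C^2$ solution on some open interval around $0$ with values in $\Omega$. Define $h(s) := u(\gamma(s))$. The chain rule gives
\[
h'(s) = \langle\nabla u(\gamma(s)),\gamma'(s)\rangle = |\nabla u(\gamma(s))|^2 = f(h(s))^2 > 0,
\]
so $h$ is a strictly increasing $C^2$ function, and hence a $C^2$ diffeomorphism from its domain onto an open neighborhood $J\subseteq\cI$ of $t_0$.

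On $J$ we can rearrange the identity $h'=f(h)^2$ as
\[
f(t) = \sqrt{h'(h^{-1}(t))},\qquad t\in J.
\]
The right-hand side is a composition of $C^1$ maps ($h'\in C^1$, $h^{-1}\in C^2$, and $\sqrt{\,\cdot\,}$ is $C^\infty$ on $(0,\infty)$), so $f$ is $C^1$ in a neighborhood of $t_0$. Since $t_0\in\cI$ was arbitrary, $f\in C^1(\cI)$.

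There is no genuine obstacle here; the only point requiring a little care is ensuring enough regularity of $\gamma$ to conclude $h\in C^2$, but this follows immediately from $u\in C^2$ and the standard ODE dependence theorems.
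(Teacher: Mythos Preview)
Your proof is correct and follows essentially the same approach as the paper: flow along the gradient field $\gamma'=\nabla u(\gamma)$ starting at a preimage of $t_0$, set $h=u\circ\gamma$, compute $h'=f(h)^2$, and invert to read off $f(t)=\sqrt{h'(h^{-1}(t))}$ as a $C^1$ function near $t_0$. The paper's argument is verbatim the same, only phrased slightly more tersely (it appeals to $\nabla u$ being locally Lipschitz rather than $C^1$, and writes the final expression as $1/\sqrt{(h^{-1})'(t)}$).
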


\begin{proof}
Firstly, $\cI$ is a (possibly unbounded) open interval because $\Omega$ is connected and $u\in C^2(\Omega)$ is without critical points.

Let $t_0\in\cI$ and choose $x_0\in\Omega$ so that $t_0 = u(x_0)$. Since $\nabla u$ is locally Lipschitz, the problem
\[\bfc'(\tau) = \nabla u^T(\bfc(\tau)),\qquad \bfc(0) = x_0,\]
is well defined for small $|\tau|$.
Define $h(\tau) := u(\bfc(\tau))$. We see that $h$ is $C^2$ with
\[h' = \nabla u(\bfc)\bfc' = |\nabla u(\bfc)|^2 = f^2(u(\bfc)) = f^2(h).\]
In particular, $h'>0$ and $h$ has a $C^2$ inverse.
Thus,
\[f(t) = f\big(h(h^{-1}(t))\big) = \sqrt{h'(h^{-1}(t))} = \frac{1}{\sqrt{(h^{-1})'(t)}}\]
which is continuously differentiable in a neighbourhood of $t_0$. It follows that $f\in C^1(\cI)$ since $t_0\in\cI$ was arbitrary.
\end{proof}

\section{Preliminaries}\label{sec:prelim}


The content of this Section is partially copied from \cite{bru19}. Here, $\Omega$ always denotes an open subset of $\mR^n$.

\subsection{Matrix derivatives}

If $\bff\colon \Omega\to\mR^m$ is a differentiable function, its \emph{Jacobian matrix} is the mapping $\nabla\bff\colon\Omega\to\mR^{m\times n}$ satisfying
\[\bff(x+y) = \bff(x) + \nabla\bff(x)y + o(\lvert y\rvert)\]
as $\mR^n\ni y\to 0$. In particular, gradients are row vectors.

\begin{definition}
Suppose that $H\colon \Omega\to\mathbb{R}^{m\times k}$ is differentiable.
The \emph{Jacobian derivative} $\nabla H\colon\mR^k\times\Omega\to\mR^{m\times n}$ of $H$ is defined by
\begin{equation}\label{def1}
\nabla_q H(x) := \nabla[Hq](x).
\end{equation}
That is, the Jacobian matrix of the vector valued function $x\mapsto H(x)q$.
\end{definition}

It is possible to define the Jacobian in terms of combinations of partial derivatives, but we shall reserve the notation $\nabla$ and $\nabla_q$ for functions that are assumed to be differentiable.


If $\bfq\colon\Omega\to\mathbb{R}^k$ is a function, we write
\[\nabla_{\bfq(x)} H(x) := \nabla_q H(x)\Big\vert_{q = \bfq(x)}.\]
Thus if $\bfq$ is differentiable, the product rule yields
\[\nabla[H\bfq](x) = H(x)\nabla\bfq(x) + \nabla_{\bfq(x)} H(x).\]
Moreover, for vectors $p\in\mR^m$ and $q\in\mR^k$ we have
\begin{equation}\label{sym2}
p^T\nabla_q H(x) = q^T\nabla_p H^T(x).
\end{equation}
Note that the dimensions match and that \eqref{sym2} is an equality in $\mR^{1\times n}$. Indeed, since $H^T(x)$ is a $k\times m$ matrix, the Jacobian $\nabla_p H^T = \nabla[H^Tp]$ is of dimension $k\times n$.

The Jacobian derivative of the transposed of a Jacobian matrix is symmetric. That is,
\[\nabla_p(\nabla \bff)^T\colon \Omega\to S(n)\]
for $\bff\in C^2(\Omega,\mR^m)$ and $p\in\mR^m$. This is because it is the Hessian of the function $x\mapsto p^T\bff(x)$.
In particular,
\begin{equation}
\nabla_q\cH u\colon\Omega\to S(n)\qquad\text{and}\qquad \nabla_p(\nabla_q H)^T\colon\Omega\to S(n)
\label{sym1}
\end{equation}
for $u\in C^3(\Omega)$ and $H\in C^2(\Omega,\mR^{m\times k})$.

\subsection{Symmetric matrices}

The \emph{spectral theorem} states that every symmetric $n\times n$ matrix can be diagonalized.
For any $X\in S(n)$ there exists an orthogonal $n\times n$ matrix $U$ such that $U^TXU = \diag(\lambda_1,\dots,\lambda_n)$ where $\lambda_1\leq\cdots\leq\lambda_n$ are the eigenvalues of $X$.
Moreover, the \emph{eigenspaces} $\mE_j := \{\xi\in\mR^n\,\vert\, X\xi = \lambda_j\xi\}$ are $d_j$-dimensional subspaces of $\mR^n$ where $d_j$ is the multiplicity of $\lambda_j$.
The spaces $\mE_j$ and $\mE_k$ are orthogonal whenever $\lambda_j\neq \lambda_k$. Obviously, $\mE_j=\mE_k$ if $\lambda_j= \lambda_k$.
By writing $U = (\xi_1,\dots,\xi_n)$, we get that
\begin{equation}\label{eq:Xdiag}
X = U\diag(\lambda_1,\dots,\lambda_n)U^T = \sum_{i=1}^n \lambda_i\xi_i\xi_i^T
\end{equation}
and that $\mE_j = \spa\{\xi_i\,\vert\, \lambda_i = \lambda_j\}$.

The class of symmetric $n\times n$ projection matrices is denoted by
\[Pr(n) := \{R\in S(n)\,\vert\, RR = R\}.\]
Since their eigenvalues are either 0 or 1, these matrices are on the form
\begin{equation}\label{eq:projsplit}
R = \sum_{i=1}^d \xi_i\xi_i^T = QQ^T,\qquad Q := (\xi_1,\dots,\xi_d)\in\mR^{n\times d},
\end{equation}
for some $d = 0,1,\dots, n$ (with the convention that empty sums are zero) and where $Q^TQ = I_d$. The set $\{\xi_1,\dots,\xi_d\}$ is an orthonormal basis for the $d$-dimensional subspace
\[R(\mR^n) := \{R\xi\,\vert\, \xi\in\mR^n\}\subseteq \mR^n.\]
Conversely, given a subspace $\mE$ of $\mR^n$, there is a unique symmetric projection $R$ such that $\mE = R(\mR^n)$.
Indeed, if $P(\mR^n) = \mE = R(\mR^n)$, then $P\xi,R\xi\in \mE$ for every $\xi\in\mR^n$. Thus $RP\xi = P\xi$ and $PR\xi = R\xi$ and $P = P^T = (RP)^T = PR = R$.
Note therefore that the factorization \eqref{eq:projsplit} is not unique as $R = \sum_{i=1}^d \eta_i\eta_i^T$ for every orthonormal basis $\{\eta_1,\dots,\eta_d\}$ of $R(\mR^n)$.

In the case of the symmetric matrix $X$ it follows that
\[R_j = \sum_{\substack{ i=1\\ \lambda_i = \lambda_j} }^n\xi_i\xi_i^T\]
is the unique eigenprojection corresponding to the $j$'th eigenvalue of $X$, regardless of the choice $U = (\xi_1,\dots,\xi_n)$ of eigenvectors.

If we let $\alpha\colon\{1,\dots,s\}\to \{1,\dots,n\}$ be a re-indexing that picks out all of the $s := \vert\{\lambda_1,\dots,\lambda_n\}\vert$ \emph{distinct} eigenvalues of $X$, we may collect the terms in \eqref{eq:Xdiag} with equal coefficients and write
\begin{equation}\label{eq:X_unrep}
X = \sum_{i=1}^s \kappa_i P_i,\qquad\text{where }\, \kappa_i := \lambda_{\alpha(i)},\, P_i := R_{\alpha(i)}.
\end{equation}

Now,
\[P_iP_j = \delta_{ij}P_i\qquad \text{and}\qquad \sum_{l=1}^sP_i = \sum_{i=1}^n\xi_i\xi_i^T = I,\]
and \eqref{eq:X_unrep} is the unique representation of $X$ in terms of a \emph{complete set of eigenprojections} $\{P_i\}_{i=1}^s$ and the \emph{unrepeated} eigenvalues $\{\kappa_i\}_{i=1}^s$.
Note that if $X = H(x)$ is a matrix valued function, then the re-indexing $\alpha$ also depends on $x$ and some care is needed when using the formula \eqref{eq:X_unrep}.
However, if the number of distinct eigenvalues is known to be constant, the lemma below shows that a continuous $H$ is on the form \eqref{eq:X_unrep} globally on connected domains. This is not completely obvious since we have to rule out the possibility that two pairs of eigenvalues merge and split simultaneously.



In Section 4 we are going to differentiate the eigenprojections of the Hessian matrix $\cH v$. A regularity result for $P_i$ is therefore also needed.

\begin{lemma}\label{lem:unirep}
Suppose that $H\colon E\to S(n)$ is continuous on a connected subset $E\subseteq\mR^n$ and has a constant number $s$ of distinct eigenvalues. Then the multiplicity of each eigenvalue is also constant and $H$ has the unique representation 
\begin{equation}
H(x) = \sum_{i=1}^s\kappa_i(x)P_i(x)
\label{eq:uni}
\end{equation}
on $E$ where $\kappa_1(x)<\cdots<\kappa_s(x)$ are the unrepeated eigenvalues and where $\{P_i(x)\}_{i=1}^s$ is a complete set of eigenprojections.

Moreover, if \eqref{eq:uni} holds in and open subset $\Omega\subseteq\mR^n$, and $H$ and each $\kappa_i\colon\Omega\to\mR$ is $C^k$ in $\Omega$, $k\geq 0$, then every $P_i\colon\Omega\to Pr(n)$ is also $C^k$ in $\Omega$.
\end{lemma}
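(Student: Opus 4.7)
The plan is to reduce both parts of the lemma to the Lagrange interpolation formula for $P_i$ in terms of $H$ and the $\kappa_j$, after first establishing that the multiplicities are locally, and hence globally, constant.

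For the first part I would begin with the classical fact that the eigenvalues $\lambda_1(x)\leq\cdots\leq\lambda_n(x)$ of a continuously varying symmetric matrix depend continuously on $x$. Fix $x_0\in E$ with distinct eigenvalues $\kappa_1(x_0)<\cdots<\kappa_s(x_0)$ and multiplicities $d_1,\ldots,d_s$. For $x$ in a sufficiently small neighbourhood $U$ of $x_0$, each $\lambda_i(x)$ lies in a small interval around $\lambda_i(x_0)$; choosing these intervals so that the clusters corresponding to different $\kappa_j(x_0)$ stay disjoint, eigenvalues that were already separated at $x_0$ remain separated on $U$. Hence $H(x)$ has at least $s$ distinct eigenvalues on $U$. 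Since by hypothesis the number is exactly $s$ throughout $E$, no cluster can split, so the multiplicities $d_j$ are locally constant; connectedness of $E$ then propagates this globally. It follows that $\kappa_i(x)=\lambda_{d_1+\cdots+d_{i-1}+1}(x)$ is continuous on $E$ for every $i$.

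For uniqueness, the conditions $H(x)P_i(x)=\kappa_i(x)P_i(x)$, $P_i(x)P_j(x)=\delta_{ij}P_i(x)$ and $\sum_i P_i(x)=I$ together with the pairwise distinctness of the $\kappa_i(x)$ force
\[P_i(x)=\prod_{j\neq i}\frac{H(x)-\kappa_j(x)I}{\kappa_i(x)-\kappa_j(x)},\]
as one verifies by applying both sides to a vector in any single eigenspace. This formula simultaneously yields uniqueness and, in the setting of the second part, the regularity claim: if $H$ and each $\kappa_i$ are $C^k$ on $\Omega$, then $P_i$ is a $C^k$ polynomial in $H$ with $C^k$ coefficients, the denominators being non-vanishing by the strict ordering built into \eqref{eq:uni}.

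The main obstacle I anticipate is the first step, ruling out jumps in the multiplicities on $E$. The argument above handles it by combining continuity of the ordered eigenvalues with the hypothesis of a globally constant number of distinct values, which precludes both mergers and splittings. An alternative, more analytic route would be via the Cauchy resolvent $P_i(x)=\frac{1}{2\pi i}\oint_{\gamma_i}(zI-H(x))^{-1}\dd z$ on a small contour $\gamma_i$ enclosing only $\kappa_i(x_0)$; then $\tr P_i(x)$ is integer-valued and continuous near $x_0$, hence locally constant. Both routes give the same conclusion, but the ordered-eigenvalue version is shorter and remains within real analysis.
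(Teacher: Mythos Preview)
Your proposal is correct, and for the second part (regularity of $P_i$) it coincides with the paper's argument: both use the Frobenius covariant / Lagrange interpolation formula
\[P_i=\prod_{j\neq i}\frac{H-\kappa_jI}{\kappa_i-\kappa_j},\]
which immediately transfers the $C^k$ regularity of $H$ and the $\kappa_j$ to $P_i$ since the denominators are nonzero.

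The first part, however, is handled differently. You argue directly: near $x_0$ the ordered eigenvalues stay in $s$ disjoint clusters by continuity, and since the global count of distinct eigenvalues is exactly $s$, each cluster must collapse to a single value, forcing the multiplicities to be locally constant. The paper instead proves that each multiplicity function $d_j(x)$ (of the $j$'th \emph{repeated} eigenvalue, $j=1,\dots,n$) is upper semicontinuous, then uses the counting identity
\[s=\sum_{i=1}^n\frac{1}{d_i(x)}\]
to deduce that each $1/d_j$ is simultaneously lower and upper semicontinuous, hence continuous, hence constant on the connected set $E$. Your route is shorter and more geometric; the paper's trick is slicker in that it never needs to localize around a base point or track clusters, and it gives continuity of all $n$ multiplicity functions at once. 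Both arguments rest on the same two ingredients---continuity of ordered eigenvalues and the constancy of $s$---so neither is strictly more general. Your mention of the resolvent integral is a third valid route, essentially equivalent to the interpolation formula but phrased complex-analytically.
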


\begin{proof}
For $x\in E$, let
$\lambda_1(x)\leq\cdots\leq\lambda_n(x)$ be the repeated eigenvalues of $H(x)$ and let $d_j\colon E\to\{1,\dots,n\}$ be the multiplicity of $\lambda_j$. We see that $d_j$ increases only if two different eigenvalues become equal. Thus $d_j$ is upper semicontinuous (u.s.c.) on $E$ since the eigenvalues are continuous (It is a standard result that eigenvalues depends continuously on the matrix. Thus $x\mapsto\lambda_j(x)$ is continuous since $H$ is continuous.)

For each $x\in E$ choose a re-indexing $\alpha\colon\{1,\dots,s\}\to\{1,\dots,n\}$ so that $l\mapsto \lambda_{\alpha(l)}$ is a bijection.
Since, for every $l$,
\[\sum_{\substack{ i=1\\ \lambda_i = \lambda_{\alpha(l)}} }^n\frac{1}{d_i} = \frac{1}{d_{\alpha(l)}}\sum_{\substack{ i=1\\ \lambda_i = \lambda_{\alpha(l)}} }^n 1 = \frac{1}{d_{\alpha(l)}}d_{\alpha(l)} = 1,\]
we get that
\[s = \sum_{l=1}^s 1 = \sum_{l=1}^s\sum_{\substack{ i=1\\ \lambda_i = \lambda_{\alpha(l)}} }^n\frac{1}{d_i(x)} = \sum_{i=1}^n\frac{1}{d_i(x)}.\]
Now,
each \(d_i\) is u.s.c. which means that \(\frac{1}{d_i}\) is l.s.c. and \(-\frac{1}{d_i}\) is again u.s.c. So as $s$ is constant,
\[\frac{1}{d_j(x)} = s - \sum_{ \substack{i=1\\ i\neq j} }^m\frac{1}{d_i(x)}\]
is u.s.c. Thus \(d_j\) is also l.s.c. and therefore continuous. The multiplicity of the eigenvalues are therefore constant on the connected set $E$ and we may define an increasing re-indexing $\alpha\colon\{1,\dots,s\}\to\{1,\dots,n\}$ that does not depend on $x$. 

Next, since the factors in the quantity
\[\prod_{\substack{l=1\\ l\neq i}}^{s}\frac{H - \kappa_lI}{\kappa_i - \kappa_l}\]
commute and $(H - \kappa_jI)P_j = 0$, the product will be zero when multiplied with $P_j$ for all $j\neq i$. On the other hand, multiplying with $P_i$ yields
\[\prod_{\substack{l=1\\ l\neq i}}^{s}\frac{H - \kappa_lI}{\kappa_i - \kappa_l}P_i = \prod_{\substack{l=1\\ l\neq i}}^{s}\frac{\kappa_i - \kappa_l}{\kappa_i - \kappa_l}P_i = P_i,\]
and thus
\[\prod_{\substack{l=1\\ l\neq i}}^{s}\frac{H - \kappa_lI}{\kappa_i - \kappa_l} = \prod_{\substack{l=1\\ l\neq i}}^{s}\frac{H - \kappa_lI}{\kappa_i - \kappa_l}I = \prod_{\substack{l=1\\ l\neq i}}^{s}\frac{H - \kappa_lI}{\kappa_i - \kappa_l}\sum_{j=1}^sP_j = P_i\]
which shows that $P_i$ has the same $C^k$ regularity as $H$ and the eigenvalues. With the convension that empty products is the identity, the formula also holds in the case $s=1$.
\end{proof}

We remark that the last part of the Lemma is true without any regularity assumption on the eigenvalues, but the proof is then more involved. See \cite{bru19} (Prop. 5.2.)

The formula $P_i = \prod_{l\neq i}^{s}(H - \kappa_lI)(\kappa_i - \kappa_l)^{-1}$ appears in \cite{MR1091716}, where the unrepeated eigenprojections are called the \emph{Frobenius covariants}.

\section{The eigenvalues are constant on level sets}\label{sec:eigconst}

The main result of this Section is

\begin{proposition}\label{prop:eigenvalues are constant on level sets}
Let $\Omega\subseteq\mR^n$ be open and connected. Assume that $v\in C^2(\Omega)$ with $\vert\nabla v\vert \equiv 1$ and $\Delta v(x) = \tilde{g}(v(x))$ for some function $\tilde{g}$. Then $v$ is real-analytic in $\Omega$. Moreover, if $v(x_0) = 0$ at some point $x_0\in\Omega$, then there is a neighbourhood of $x_0$ in which the Hessian matrix of $v$ is on the form
\begin{equation}
\Hv(x) = \sum_{i=0}^m\frac{c_i}{1 + c_iv(x)}P_i(x),\qquad \{P_i\}_{i=0}^m\, \text{ c.s.e.},
\label{eq:prop21}
\end{equation}
for some $m\geq 0$ distinct nonzero constants $c_1, \dots,c_m$, and where $c_0:=0$.
\end{proposition}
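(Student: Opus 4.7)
The strategy is to show that along gradient flow lines the Hessian of $v$ satisfies a matrix Riccati equation whose solutions produce the explicit form claimed; real-analyticity then follows as a by-product.

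First, exploit $|\nabla v|^2\equiv 1$. One differentiation gives $\cH v\,\nabla v^T = 0$, so $\nabla v^T$ always lies in the kernel of $H := \cH v$; this supplies the $c_0 = 0$ summand with $P_0(x) := \nabla v^T(x)\nabla v(x)$ in the final decomposition. Differentiating $v_{ab}v_a = 0$ once more yields the cubic identity $v_{abc}v_a = -(H^2)_{bc}$. Consider the gradient flow $\bfc'(\tau) = \nabla v^T(\bfc(\tau))$ with $\bfc(0) = x_0$; this is a $C^2$ curve since $\nabla v\in C^1$, and $v(\bfc(\tau)) = \tau$ because $|\nabla v|=1$. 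The cubic identity then translates into the matrix Riccati equation
\[
\frac{\dd}{\dd\tau}H(\bfc(\tau)) = -H(\bfc(\tau))^2.
\]
Taking traces and using cyclic invariance yields the recursion $\frac{\dd}{\dd\tau}T_k(\bfc(\tau)) = -k\,T_{k+1}(\bfc(\tau))$ for the power sums $T_k := \tr H^k$. Since $T_1 = \Delta v = \tilde g(v)$ is a function of $v$, iterating forces each $T_k$ to be a function of $v$ alone on $\Omega$.

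Next I would extract that each distinct eigenvalue of $H$ is itself a function of $v$ via a Vandermonde argument: at any two points with the same value of $v$ all $T_k$ coincide, and writing out $\sum_r (d_r - e_r)\mu_r^k = 0$ over the union of distinct eigenvalues $\mu_r$ from both points gives a homogeneous linear system for the multiplicity differences whose coefficient matrix is the Vandermonde in the $\mu_r$. Its invertibility forces the distinct eigenvalues and their multiplicities to be functions of $v$ alone, bypassing Bell polynomials and the Newton identities. On a neighbourhood of $x_0$ where the number of distinct eigenvalues is locally constant, Lemma \ref{lem:unirep} supplies the spectral representation $H = \sum_j \kappa_j(v) P_j$ with smooth projections $P_j$. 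Projecting $H' = -H^2$ onto each $P_i$ (using $P_iP_jP_i = \delta_{ij}P_i$ and $P_iP_j'P_i = 0$) decouples the matrix Riccati into scalar Riccati equations $\kappa_i'(\tau) = -\kappa_i(\tau)^2$, whose solutions are $\kappa_i(\tau) = c_i/(1+c_i\tau)$ with $c_i := \kappa_i(0)$. Substituting $\tau = v(x)$ (using $v(x_0)=0$) and including the $c_0 = 0$ summand from the kernel direction yields the announced formula.

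Real-analyticity of $v$ comes out for free: the representation makes $\tilde g(t) = \sum_j d_j c_j/(1+c_j t)$ a finite sum of rational functions of $t$, hence real-analytic on its domain, and standard elliptic regularity for the semilinear equation $\Delta v = \tilde g(v)$ with analytic right-hand side upgrades $v\in C^2$ to $v\in C^\omega(\Omega)$. The main obstacle is the regularity bootstrap needed to justify the cubic identity and the matrix Riccati from only $v\in C^2$: one handles this by first establishing the integrated identity
\[
T_1(\bfc(\tau)) - T_1(\bfc(0)) = -\int_0^\tau T_2(\bfc(s))\dd s
\]
along a single gradient flow curve by mollification, which gives $\tilde g \in C^1$; iterating then promotes $\tilde g$ to $C^\infty$, legitimises the higher-order identities, and closes the loop via the analytic representation above. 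Local constancy of the number of distinct eigenvalues near $x_0$ is ensured once analyticity is in hand, since two distinct analytic functions of $v$ cannot coincide except on a discrete set.
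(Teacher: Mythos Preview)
Your overall strategy---Riccati evolution of $\cH v$ along gradient flow, power-sum/Vandermonde argument for constancy of eigenvalues on level sets, then analyticity---is exactly the paper's. But two steps in your execution have genuine gaps.

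\textbf{Regularity bootstrap.} Your mollification argument for the integrated identity does not work as written: mollifying $v$ destroys $|\nabla v_\epsilon|\equiv 1$, so the cubic identity $v_{abc}v_a=-(H^2)_{bc}$ fails for $v_\epsilon$ and there is nothing to pass to the limit. The paper sidesteps this entirely. From $|\nabla v|=1$ one gets that the flow lines are straight, $\nabla v(x+t\nabla v^T(x))=\nabla v(x)$; differentiating this in the \emph{spatial} variable $x$ (which only uses $v\in C^2$) yields the algebraic identity
\[
\cH v(x)=\cH v\big(x+t\nabla v^T(x)\big)\big(I+t\,\cH v(x)\big),
\]
from which the explicit formula $\cH v(x+t\nabla v^T(x))=\sum_i\frac{\kappa_i(x)}{1+t\kappa_i(x)}P_i(x)$ follows by elementary linear algebra. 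The Riccati relation $\partial_t X=-X^2$ and the smoothness of $\tilde g$ near $0$ are then read off from this explicit formula, with no third derivative of $v$ and no mollification needed.

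\textbf{Local constancy of the number of distinct eigenvalues.} Your argument is circular: you invoke Lemma~\ref{lem:unirep} on a neighbourhood where the number of distinct eigenvalues is constant, derive the formula and analyticity there, and then say analyticity guarantees local constancy near $x_0$. But nothing yet places $x_0$ in such a neighbourhood. The paper's fix is to work on the connected zero level set $E$, let $E_m\subseteq E$ be a component of the (relatively open) set where the number of distinct eigenvalues is maximal, prove the eigenvalues are \emph{constant} on $E_m$ via the Vandermonde argument, and conclude that $E_m$ is also relatively closed in $E$, hence $E_m=E\ni x_0$. This open--closed argument is what actually delivers the formula in a full neighbourhood of $x_0$.

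A minor point: your appeal to analytic elliptic regularity for $\Delta v=\tilde g(v)$ is correct but heavier than needed. The paper instead constructs an explicit antiderivative $\tilde G$ with $\tilde G''/\tilde G'=-\tilde g$, so that $w:=\tilde G\circ v$ is harmonic and $v=\tilde G^{-1}\circ w$ is real-analytic by composition---entirely self-contained.
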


For $x,y$ in $\mR^n$, we let $[x, y] := \{ sx + (1-s)y\,\vert\, 0\leq s\leq 1\}$ denote the line segment joining the two points.

\begin{lemma}\label{lem:disteig}
Let $v\in C^2(\Omega)$ with $\vert\nabla v\vert \equiv 1$.
The following holds for all $x\in\Omega$ and all $t\in\mathbb{R}$ such that $[x, x+t\nabla v(x)]\subseteq\Omega$.
\begin{enumerate}[(1)]
\item $v(x+t\nabla v^T(x)) = v(x) + t$.
\item $\nabla v(x+t\nabla v^T(x)) = \nabla v(x)$.
\item If $\cH v(x) = \sum_{i=0}^m\kappa_i(x)P_i(x)$, then
\[\Hv\Big(x+t\nabla v^T(x)\Big) = \sum_{i=0}^{m}\frac{\kappa_i(x)}{1+t\kappa_i(x)}P_i(x).\]
\end{enumerate}
\end{lemma}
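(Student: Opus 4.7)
The plan is to establish (1) and (2) simultaneously by recognizing the straight-line segment as the gradient-flow curve of $v$, and then to deduce (3) by differentiating the identity in (2) with respect to the base point.

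The basic ingredient is that $|\nabla v|^2\equiv 1$ implies, upon differentiation, $\mathcal{H}v\,\nabla v^T\equiv 0$ in $\Omega$. For (1) and (2) I would introduce the gradient flow ODE $\gamma'(\tau) = \nabla v^T(\gamma(\tau))$, $\gamma(0) = x$, which is well-posed since $\nabla v\in C^1$. Along the solution one computes $\frac{d}{d\tau}v(\gamma(\tau)) = |\nabla v(\gamma(\tau))|^2 = 1$ and $\frac{d}{d\tau}\nabla v^T(\gamma(\tau)) = \mathcal{H}v(\gamma(\tau))\,\nabla v^T(\gamma(\tau)) = 0$, so $\nabla v(\gamma(\tau))\equiv \nabla v(x)$ and hence $\gamma'(\tau)\equiv \nabla v^T(x)$. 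Thus $\gamma$ is the straight line $\tau\mapsto x+\tau\nabla v^T(x)$, which immediately yields (1) and (2). The ODE continuation is valid all along the segment because the segment is a compact subset of $\Omega$ by hypothesis.

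For (3) I would view (2) as an identity in the base point $y$, on a small open neighbourhood of $x$ on which the segment $[y, y+t\nabla v^T(y)]$ is still contained in $\Omega$: namely, $\nabla v^T(y+t\nabla v^T(y)) = \nabla v^T(y)$. Since the Jacobian of $y\mapsto y+t\nabla v^T(y)$ is $I+t\mathcal{H}v(y)$, the chain rule produces
\[\mathcal{H}v\bigl(y+t\nabla v^T(y)\bigr)\,(I+t\mathcal{H}v(y)) = \mathcal{H}v(y).\]
Setting $y=x$ and inserting $\mathcal{H}v(x) = \sum_i\kappa_i P_i$, we have $I+t\mathcal{H}v(x) = \sum_i(1+t\kappa_i)P_i$ with inverse $\sum_i(1+t\kappa_i)^{-1}P_i$. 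Using the orthogonality $P_iP_j=\delta_{ij}P_i$ then delivers $\mathcal{H}v(x+t\nabla v^T(x)) = \sum_i\frac{\kappa_i}{1+t\kappa_i}P_i$, as claimed.

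The main obstacle is justifying that the inversion is always legitimate, i.e.\ that $1+s\kappa_i(x)\neq 0$ for every $i$ and every $s$ in the parameter interval. I would argue by contradiction: if $1+s_*\kappa_i(x) = 0$ at some $s_*$ with $x+s_*\nabla v^T(x)\in\Omega$, the matrix identity above would force the $i$-th spectral coefficient of $\mathcal{H}v(x+s\nabla v^T(x))$ to diverge as $s\to s_*$, contradicting the continuity of $\mathcal{H}v$ on the compact segment $[x,x+t\nabla v^T(x)]\subseteq\Omega$. Hence the inversion is always valid and (3) follows.
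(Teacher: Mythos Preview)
Your proof is correct and follows essentially the same route as the paper: the gradient-flow/straight-line argument for (1) and (2), and differentiation of (2) in the base point to get $\mathcal{H}v(z)(I+t\mathcal{H}v(x))=\mathcal{H}v(x)$ for (3). The paper multiplies this identity on the right by each $P_i$ and divides by $1+t\kappa_i$, which is equivalent to your spectral inversion of $I+t\mathcal{H}v(x)$. In fact you are slightly more careful: the paper divides by $1+t\kappa_i$ without comment, whereas you supply a continuity argument. (A shorter alternative: from $(1+t\kappa_i)\mathcal{H}v(z)P_i=\kappa_iP_i$, if $1+t\kappa_i=0$ then $\kappa_iP_i=0$, forcing $\kappa_i=0$ since $P_i\neq 0$, hence $1+t\kappa_i=1$, a contradiction.)
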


In Part (3), the number $m+1$ of distinct eigenvalues of $\cH v$ generally depends on $x$. Our claim is therefore that $m$ and the eigenprojections $P_i$ are constant in the gradient direction.

We remark that similar results can be found in \cite{MR0473443} (Appendix), and in  \cite{MR3155251} (Thm. 1.18.)

\begin{proof}
Let  $x\in\Omega$. Since $\nabla v$ is locally Lipschitz, the integral line $\bfc$ satisfying
\[\bfc'(t) = \nabla v^T(\bfc(t)),\qquad \bfc(0) = x,\]
is well defined as long as $\bfc(t)\in\Omega$. Since $\bfc'' = \Hv(\bfc)\nabla v^T(\bfc) = 0$ we get that $\bfc'(t) = \nabla v^T(\bfc(t)) = \nabla v^T(x)$ is constant and thus $\bfc(t) = x + t\nabla v^T(x)$. That is,
\[\nabla v^T(x) = \bfc'(t) = \nabla v^T(\bfc(t)) = \nabla v^T(x + t\nabla v^T(x)).\]
The first claim now follows as
\[\frac{\dd}{\dd t}v(x + t\nabla v^T(x)) = \nabla v(x + t\nabla v^T(x))\nabla v^T(x) = \nabla v(x)\nabla v^T(x) = 1.\]
	
(3) Write $z := x+t\nabla v^T(x)$. By differentiating the identity in (2) we get
\[\Hv(x) = \Hv\left(z\right)\big(I + t\Hv(x)\big),\]
and multiplying from the right with $P_i(x)$ gives
\[\kappa_i(x)P_i(x) = (1+t\kappa_i(x))\Hv(z)P_i(x).\]
Dividing by $1+t\kappa_i$ and summing over $i$ then yields the result:
\[\cH v(z) = \cH v(z)\sum_{i=0}^m P_i(x) = \sum_{i=0}^m \frac{\kappa_i(x)}{1+t\kappa_i(x)}P_i(x).\]
\end{proof}

\begin{proof}[Proof of Proposition \ref{prop:eigenvalues are constant on level sets}]
Let $x_0\in\Omega$ and suppose that $v(x_0) = 0$. Define $E = E(x_0)\subseteq\Omega$ to be the connected component of the zero level set $\{x\in\Omega\,\vert\, v(x) = 0\}$ containing $x_0$. Expand this level set in the gradient direction and write
\[\Omega' = \Omega'(x_0) := \Big\{ x + t\nabla v(x)\,\vert\, x\in E(x_0),\, t\in\mR,\, [x, x+t\nabla v(x)]\subseteq\Omega\Big\}.\]
For $x\in E$, denote by $ s(x)$ the number of \emph{distinct} eigenvalues of $\Hv(x)$.
Since $\Hv$ always has a zero eigenvalue,
\[m := \max_{x\in E} s(x)-1\]
will be the largest number of distinct and nonzero eigenvalues on the level set $E$. Let $E_m$ be a nonempty connected component of $\{x\in E\,\vert\,  s(x)-1 = m\}$.
In order to go from $ E_m$ to $E\setminus  E_m$, the number $ s(x)$ has to decrease which again means that two distinct eigenvalues becomes equal. Thus $E_m$ is relatively open since the eigenvalues are continuous. We want to show that $E_m = E$. This is immediate if $m=0$, and by part (3) of Lemma \ref{lem:disteig} we get that $\cH v = 0$ in $\Omega'$. Assume therefore in the following that $m\geq 1$.
	
By Lemma \ref{lem:unirep} the Hessian matrix of $v$ has the unique representation
\[\Hv(x) = \sum_{i=1}^m\kappa_i(x)P_i(x),\qquad x\in E_m,\]
where $\kappa_1(x) < \cdots < \kappa_m(x)$ denote the distinct nonzero eigenvalues.
	
For $x\in E_m$ and $t\in\mR$ such that $x + t\nabla v^T(x)\in\Omega'$ let $X$ be the matrix valued function
\[X(x,t) := \Hv(x+t\nabla v^T(x)).\]
By part (3) of Lemma \ref{lem:disteig}, we have
\[X(x,t) = \sum_{i=1}^m\frac{\kappa_i(x)}{1+t\kappa_i(x)}P_i(x)\]
which is smooth in $t$ for small $|t|$.
Note also that
\[\tilde{g}(t) = \tilde{g}(v(x) + t) = \tilde{g}\left( v(x+t\nabla v^T(x)) \right) = \tr X(x,t)\]
by part (1) of Lemma \ref{lem:disteig} and since $v(x) = 0$ for $x\in E_m\subseteq E$. Thus $\tilde{g}$ is also smooth close to $t=0$.
Next,
\begin{align*}
\frac{\partial}{\partial t}X(x,t) &= -\sum_{i=1}^m\left(\frac{\kappa_i(x)}{1+t\kappa_i(x)}\right)^2P_i(x)\\
&= -X^2(x,t)
\end{align*}
and by induction,
\[\frac{\partial^k}{\partial t^k}X = (-1)^k k!X^{k+1}.\]
Setting $d_i := \tr P_i(x)$ -- the number of eigenvalues equal to $\kappa_i(x)$ -- which also is constant on $E_m$ by Lemma \ref{lem:unirep}, we get that
	\begin{align*}
	g(t) &= \tr X(x,t) = \sum_{i=1}^m d_i\frac{\kappa_i(x)}{1+t\kappa_i(x)},\\
	g'(t) &= -\tr X^2(x,t) = -\sum_{i=1}^m d_i\left(\frac{\kappa_i(x)}{1+t\kappa_i(x)}\right)^2,\\
	&\:\:\vdots\\
	g^{(k)}(t) &= (-1)^k k!\tr X^{k+1}(x,t) = (-1)^k k!\sum_{i=1}^m d_i\left(\frac{\kappa_i(x)}{1+t\kappa_i(x)}\right)^{k+1}.
	\end{align*}
	In particular,
	\begin{align*}
	C_1 &:= g(0) = \sum_{i=1}^m d_i\kappa_i(x),\\
	C_2 &:= -g'(0) = \sum_{i=1}^m d_i\kappa_i^2(x),\\
	&\:\:\vdots\\
	C_m &:= \frac{(-1)^{m-1}}{(m-1)!}g^{(m-1)}(0) = \sum_{i=1}^m d_i\kappa_i^m(x).
	\end{align*}
Define the vector field $\bff\colon \mathbb{R}^m\to\mathbb{R}^m$, $\bff(y) = (f^1(y),\dots,f^m(y))^T$, as
	\[f^k(y) = f^k(y_1,\dots,y_m) := \sum_{i=1}^m  d_iy_i^k,\qquad k = 1,\dots, m.\]
	Clearly, $\bff$ is smooth and the partial derivatives of the components are
\[\frac{\partial}{\partial y_j} f^k(y) = k d_jy_j^{k-1}.\]
The Jacobian matrix of $\bff$ can therefore be written and factorized as
\begin{align*}
	\nabla\bff(y) &= 
	\begin{pmatrix}
	 d_1 &  d_2 & \cdots &  d_m\\
	2 d_1y_1 & 2 d_2y_2 &\cdots & 2 d_m y_m\\
	3 d_1y_1^2 & 3 d_2y_2^2 &\cdots & 3 d_my_m^2\\
	\vdots & \vdots & &\vdots\\
	m d_1y_1^{m-1} & m d_2y_2^{m-1} &\cdots & m d_my_m^{m-1}
	\end{pmatrix}\\ &= \diag(1,\dots,m)
	\begin{pmatrix}
	1 & 1 & \cdots & 1\\
	y_1 & y_2 &\cdots & y_m\\
	y_1^2 & y_2^2 &\cdots & y_m^2\\
	\vdots & \vdots & &\vdots\\
	y_1^{m-1} & y_2^{m-1} &\cdots & y_m^{m-1}
	\end{pmatrix}\diag( d_1,\dots, d_m).
	\end{align*}
The middle factor is the the \emph{Vandermonde matrix}
and is known to have determinant $\prod_{1\leq i<j\leq m}(y_j-y_i)$. Thus
	\[\det \nabla\bff(y) = m!\, d_1\cdots d_m \prod_{1\leq i<j\leq m}(y_j-y_i)\]
	which is nonzero if and only if the $y_i$'s are all distinct.
This is the case when evaluated at $y_i = \kappa_i(x)$. The inverse $\bff^{-1}$ then exists in a neighbourhood of the image, and since
	\[\bff(\kappa_1(x),\dots,\kappa_m(x)) = (C_1,\dots, C_m)^T,\]
	it follows that
	\[\begin{pmatrix}
	\kappa_1(x)\\
	\vdots\\
	\kappa_m(x)
	\end{pmatrix} = \bff^{-1}(C_1,\dots, C_m) =:
	\begin{pmatrix}
	c_1\\
	\vdots\\
	c_m
	\end{pmatrix}\]
	for all $x\in E_m$.
	
	We have shown that, in the relatively open set $ E_m\subseteq E$ -- where the number of distinct eigenvalues is maximal -- each eigenvalue is constant. Being continuous, they are naturally constant in the closure in which they are still distinct. Thus $ E_m$ is also relatively closed in $E$, and since the level set was assumed to be connected, we must have $ E_m = E$.
	
Next, we note that for $z\in\Omega'$ we can choose $x\in E$ and $t\in\mR$ so that $z = x + t\nabla v^T(x)$. Then $v(z) = t$, $P_i(z) = P_i(x)$, and
\begin{align*}
\Hv(z) &= \Hv\left(x + t\nabla v^T(x)\right)\\
	&= \sum_{i=1}^{m}\frac{\kappa_i(x)}{1+t\kappa_i(x)}P_i(x)\\
	&= \sum_{i=1}^{m}\frac{c_i}{1+c_iv(z)}P_i(z).
\end{align*}
That is,
\[\cH v = \sum_{i=1}^{m}\frac{c_i}{1+c_iv}P_i\]
in the neighbourhood $\Omega'(x_0)$ of $x_0$ and
\[\tilde{g}(t) = \sum_{i=1}^{m}d_i\frac{c_i}{1+c_it}, \qquad d_i := \tr P_i = const.,\]
for small $|t|$.

Now we turn to the real-analyticity. Define $\tilde{G}$ to be the analytic function
\[\tilde{G}(t) := \int_0^t\prod_{i=1}^m(1+c_i\tau)^{-d_i}\dd \tau,\qquad \text{$|t|$ small}.\]
We have that $\tilde{G}' > 0$ and that $\ln\tilde{G}'(t) = - \sum_{i=1}^m  d_i \ln(1+c_it)$. Thus,
\[\frac{\tilde{G}''(t)}{\tilde{G}'(t)} = \frac{\dd}{\dd t}\ln\tilde{G}'(t) = - \sum_{i=1}^m  d_i \frac{\dd}{\dd t}\ln(1+c_it) = - \sum_{i=1}^m  d_i\frac{c_i}{1+c_it} = -\tilde{g}(t).\]
If we now let $w := \tilde{G}\circ v$, then $\cH w = \tilde{G}''(v)\nabla v^T\nabla v + \tilde{G}'(v)\cH v$ and it follows that
\[\Delta w = \tilde{G}''(v) + \tilde{G}'(v)\tilde{g}(v) = 0.\]
Since $\tilde{G}'>0$, the function $\tilde{G}$ has an analytic inverse $\tilde{G}^{-1}$ and $v = \tilde{G}^{-1}\circ w$ is therefore real-analytic in $\Omega'$.

We have shown that if $v\in C^2(\Omega)$ with $|\nabla v|=1$ and where $\Delta v$ is constant on the level sets, then $v$ is real-analytic in a neighbourhood of every point on its null-level set. Clearly, for other points $x_1\in\Omega$ with $c := v(x_1)$, the function $v_c := v-c$ also satisfies the conditions and is therefore real-analytic near $x_1$.
Thus $v$ is real-analytic around every point in $\Omega$.
\end{proof}

\section{The Hessian matrix has atmost one nonzero eigenvalue}

\begin{proposition}\label{prop:mis1}
Let $\Omega\subseteq\mR^n$ be open. Assume that $v\in C^4(\Omega)$ with $\vert\nabla v\vert \equiv 1$ and with Hessian matrix
\[\Hv(x) = \sum_{i=0}^m\frac{c_i}{1 + c_iv(x)}P_i(x)\]
for $m\geq 1$ distinct nonzero constants $c_1< \cdots<c_m$ and where $c_0=0$. Then $m=1$.
\end{proposition}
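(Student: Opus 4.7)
The plan is to argue by contradiction: assume $m\geq 2$ and derive an algebraic identity on the distinct constants $c_1,\dots,c_m$ (with multiplicities $d_i := \tr P_i$) that admits no valid solution.

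First I would differentiate the Hessian formula. Writing $\lambda_i(v) = c_i/(1+c_iv)$, so that $\lambda_i'(v) = -\lambda_i^2$,
\[
\nabla_q\cH v = -\sum_{i=0}^m \lambda_i^2\,(P_iq)\,\nabla v + \sum_{i=0}^m \lambda_i\,\nabla_q P_i.
\]
Since $\cH v\,\nabla v^T \equiv 0$, the gradient direction lies in the range of $P_0$, hence $P_i\nabla v^T = 0$ for $i\geq 1$. Combining this with the derivative identities coming from $P_iP_j = \delta_{ij} P_i$ and $\sum_iP_i = I$, and contracting on both sides with eigenprojections, one extracts for distinct $j,k\in\{1,\dots,m\}$ the key relation
\[
P_k\,\nabla_q\cH v\,P_j = (\lambda_j-\lambda_k)\,P_k\,\nabla_q P_j,
\]
which isolates the cross-derivatives $P_k\nabla_q P_j$ in terms of the third derivative of $v$.

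Next I would differentiate once more and invoke the symmetries of \eqref{sym1}: both $\nabla_q\cH v$ and $\nabla_p(\nabla_q\cH v)^T$ take values in $S(n)$. By Lemma~\ref{lem:disteig}(3) the eigenprojections are constant along the integral curves of $\nabla v$, so $(\nabla_q P_i)\nabla v^T = 0$, which kills a substantial number of mixed terms. Contracting the resulting fourth-derivative identity against a triple of eigenprojections $P_i$, $P_j$, $P_k$ should then yield a scalar identity purely in the eigenvalues and multiplicities, of Cartan type,
\[
\sum_{j:\,\lambda_j\neq\lambda_i}\, d_j\,\frac{\lambda_i\lambda_j}{\lambda_i-\lambda_j} \;=\; 0,
\]
one equation for each $i$ with $\lambda_i\neq 0$. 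Because $\lambda_0 = 0$ the $j=0$ term vanishes, and the sum really runs over $j\in\{1,\dots,m\}\setminus\{i\}$. When $m = 2$ the identity consists of a single nonzero term $d_j\,c_ic_j/(c_i-c_j)$ and the contradiction is immediate; for a general $m\geq 2$ the $m$ identities are algebraically incompatible with the $c_i$ all nonzero and pairwise distinct, either by pairwise elimination or by a Vandermonde-style nondegeneracy argument of the kind employed in Section~\ref{sec:eigconst}.

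The main obstacle is the bookkeeping in the fourth-derivative calculation. The rank-one contributions $(P_iq)\nabla v$ and the special role played by the kernel projection $P_0$ generate a large number of terms, most of which cancel only after systematic use of \eqref{sym1}, Lemma~\ref{lem:disteig}, and the projection identities obtained from differentiating $\sum_i P_i = I$ and $P_i^2 = P_i$. Organising the algebra around the matrix-derivative notation, as advertised in the introduction, is precisely what keeps the calculation tractable and allows one to read off the final algebraic constraint in closed form.
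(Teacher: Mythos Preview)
Your outline follows the paper's route exactly: differentiate the spectral decomposition of $\cH v$, exploit the projection identities and the symmetry \eqref{sym1} to arrive at Cartan's formula
\[
\sum_{\substack{j=0\\ j\neq i}}^m d_j\,\frac{\kappa_i\kappa_j}{\kappa_i-\kappa_j}=0,\qquad i=1,\dots,m,
\]
and then conclude $m=1$. So the strategy is not in question.

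There are, however, two genuine gaps. First, the derivation of Cartan's formula is the entire content of the proof, and you have replaced it by the phrase ``should then yield''. In the paper this step occupies several pages: one needs the explicit identities of Lemma~\ref{lem:M} (both \eqref{eq:accel_lem} and \eqref{eq:accel_lemII}), the pseudo-inverses $H_i^\dagger$, the computation of $P_i\nabla_e[P_i(\nabla_qP_i)^TP_j]P_j$ and its transpose, a cancellation via $H_i^\dagger-H_j^\dagger=(P_i+P_j)/(\kappa_i-\kappa_j)-(\kappa_i-\kappa_j)H_i^\dagger H_j^\dagger$, and finally a trace-and-sum argument in which an antisymmetric double sum vanishes. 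None of these intermediate structures appears in your sketch, and the generic ``contract against a triple $P_i,P_j,P_k$'' does not by itself produce the formula; the paper works with a specific unit vector $q$ with $P_iq=q$ and a careful choice of which identity to transpose and subtract.

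Second, your proposed endgame is off. Cartan's identities are \emph{not} generically incompatible---they are satisfied nontrivially on spheres---so neither ``pairwise elimination'' nor a Vandermonde argument will close the proof. What is special here is that $\kappa_0=0$, so the $j=0$ term drops out, and the remaining $\kappa_j$ are nonzero reals. The paper's argument is a two-line sign check: choose $i\geq 1$ with $|\kappa_i|$ minimal; then for every $j\neq i$, $j\geq 1$, one has $\kappa_j/(\kappa_i-\kappa_j)<0$, so all nonzero summands share the sign of $-\kappa_i$, contradicting the vanishing sum unless $m=1$. You should replace the Vandermonde suggestion with this observation.
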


Write
\[H(x) := \Hv(x) = \sum_{i=0}^m \kappa_i(x)P_i(x)\]
where $\kappa_i := c_i(1 + c_iv)^{-1}$. Note that $\nabla\kappa_i = -\kappa_i^2\nabla v$ and that $\kappa_0\equiv 0$.

By Lemma \ref{lem:unirep} the eigenprojections $P_i\colon\Omega\to Pr(n)$ are $C^2$.
Since $\nabla v H= 0$, we have $\nabla v P_0 = \nabla v$ and that $\nabla v P_i = 0$ for $i\geq 1$. For $0\leq i\leq m$, define the \emph{pseudo inverses}
\[H_i^\dagger(x) := \sum_{\substack{k=0\\ k\neq i}}^m\frac{P_k(x)}{\kappa_i(x)-\kappa_k(x)}.\]
They commute with each other and $H$, and satisfies
\[(\kappa_i I - H)H_i^\dagger = I-P_i\]
as can easily be checked.

\begin{lemma}\label{lem:M}
Let $q\in\mR^n$. Then
\begin{align}
\nabla_q P_iP_i &= -\kappa_i\left(\nabla v q\cdot I + \nabla v^Tq^T\right)P_i,\label{eq:accel_lem}\\
\nabla_{P_i q} P_i P_j &= (\kappa_i-\kappa_j)H^\dagger_i(\nabla_q P_i)^TP_j\label{eq:accel_lemII},
\end{align}
for all $i,j=0,\dots,m$, $i\neq j$.
\end{lemma}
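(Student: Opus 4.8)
The plan is to wring everything out of a single differentiation of the eigenrelation $HP_i=\kappa_iP_i$, read as an identity between vector fields (with $q$ held fixed) and then sandwiched between the projections and the pseudo-inverses. Throughout I use the product rule $\nabla[H\mathbf q]=H\nabla\mathbf q+\nabla_{\mathbf q}H$, the relation $\nabla\kappa_i=-\kappa_i^2\nabla v$, the symmetry $\nabla_r\cH v\in S(n)$ for a fixed vector $r$ from \eqref{sym1}, the symmetry of $P_i$, and the pairing $p^T\nabla_qP_i=q^T\nabla_pP_i$ obtained from \eqref{sym2}.

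Applying $\nabla_q$ to $HP_iq=\kappa_iP_iq$ gives the master identity
\[(H-\kappa_iI)\nabla_qP_i+\nabla_{P_iq}H+\kappa_i^2(P_iq)\nabla v=0.\]
Left-multiplying it by $P_i$ annihilates the first term and leaves $P_i\nabla_{P_iq}H=-\kappa_i^2(P_iq)\nabla v$; transposing (both $\nabla_{P_iq}H$ and $P_i$ are symmetric) yields $\nabla_{P_iq}H\,P_i=-\kappa_i^2\nabla v^T(P_iq)^T$. Left-multiplying instead by $P_j$ with $j\neq i$, using $P_j(H-\kappa_iI)=(\kappa_j-\kappa_i)P_j$ and $P_jP_iq=0$, gives $P_j\nabla_{P_iq}H=(\kappa_i-\kappa_j)P_j\nabla_qP_i$, whose transpose is $\nabla_{P_iq}H\,P_j=(\kappa_i-\kappa_j)(\nabla_qP_i)^TP_j$. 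Finally, left-multiplying by $H_i^\dagger$ and using $H_i^\dagger(H-\kappa_iI)=-(I-P_i)$ together with $H_i^\dagger P_i=0$ (so the $\nabla v$ term drops) yields $(I-P_i)\nabla_qP_i=H_i^\dagger\nabla_{P_iq}H$.

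Differentiating $P_i^2=P_i$ in the same way gives $(I-P_i)\nabla_qP_i=\nabla_{P_iq}P_i$, hence $\nabla_{P_iq}P_i=H_i^\dagger\nabla_{P_iq}H$. Right-multiplying this by $P_j$, $j\neq i$, and substituting the expression for $\nabla_{P_iq}H\,P_j$ found above gives precisely \eqref{eq:accel_lemII}. Right-multiplying instead by $P_i$ and substituting $\nabla_{P_iq}H\,P_i=-\kappa_i^2\nabla v^T(P_iq)^T$ requires $H_i^\dagger\nabla v^T=\kappa_i^{-1}\nabla v^T$, which follows from $\cH v\,\nabla v^T=0$ (a consequence of $|\nabla v|\equiv1$), since the distinctness of the $\kappa_k$ and orthogonality of the eigenspaces force $P_0\nabla v^T=\nabla v^T$ and $P_k\nabla v^T=0$ for $k\geq1$; the outcome is $\nabla_{P_iq}P_i\,P_i=-\kappa_i\nabla v^T(P_iq)^T$. (All of this is trivially true when $i=0$, since then $\kappa_0=0$ and every $\nabla v$-contribution vanishes.) For \eqref{eq:accel_lem} it remains to compute the diagonal block $P_i\nabla_qP_i\,P_i$: pairing with arbitrary vectors $a,b$, applying $p^T\nabla_qP_i=q^T\nabla_pP_i$ with $p=P_ia$, and then inserting the formula just obtained for $\nabla_{P_ia}P_i\,P_i$, one gets $a^T(P_i\nabla_qP_i\,P_i)b=-\kappa_i(\nabla v\,q)\,a^TP_ib$, i.e. $P_i\nabla_qP_i\,P_i=-\kappa_i(\nabla v\,q)P_i$. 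Adding this to $(I-P_i)\nabla_qP_i\,P_i=\nabla_{P_iq}P_i\,P_i=-\kappa_i\nabla v^T(P_iq)^T$ and using $\nabla v^T(P_iq)^T=\nabla v^Tq^TP_i$ gives $\nabla_qP_i\,P_i=-\kappa_i\big((\nabla v\,q)I+\nabla v^Tq^T\big)P_i$, which is \eqref{eq:accel_lem}.

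Once the master identity is in hand the calculations are almost entirely linear-algebraic; the main thing to be careful about is the bookkeeping in the product rule, in particular keeping the frozen direction in $\nabla_{P_iq}(\cdot)$ straight, and justifying each transposition by the symmetry of $\nabla_r\cH v$ and of $P_i$. The one step that is not pure matrix manipulation is the evaluation of $P_i\nabla_qP_i\,P_i$, where one genuinely has to test against vectors and invoke \eqref{sym2}; I expect that to be the only real (if mild) obstacle.
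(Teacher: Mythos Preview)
Your proof is correct and follows essentially the same route as the paper: differentiate $HP_i=\kappa_iP_i$, hit the resulting ``master identity'' from the left by $H_i^\dagger$, $P_i$, $P_j$ (using symmetry of $\nabla_{P_iq}H$ to pass to right-multiplications), combine with $\nabla_{P_iq}P_i=(I-P_i)\nabla_qP_i$ from $P_i^2=P_i$, and recover the diagonal block $P_i\nabla_qP_iP_i$ via \eqref{sym2}. The only cosmetic difference is organizational: you first catalogue all the sandwichings and then assemble, whereas the paper goes directly to $(I-P_i)\nabla_qP_iP_i$ by multiplying simultaneously left by $H_i^\dagger$ and right by $P_i$ on the two displayed forms of the master identity; the content is identical.
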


\begin{proof}[Proof of Lemma]
Let $i=0,\dots,m$. Differentiating $HP_i = \kappa_i P_i$ gives $H\nabla_q P_i + \nabla_{P_iq} H = \kappa_i\nabla_q P_i - \kappa_i^2P_iq\nabla v$. Thus,
\begin{equation}\label{eq:Hid}
\begin{aligned}
\nabla_{P_iq} H &= (\kappa_i I - H)\nabla_q P_i - \kappa_i^2P_iq\nabla v\\
&= (\nabla_q P_i)^T(\kappa_i I - H) - \kappa_i^2\nabla v^T q^T P_i
\end{aligned}
\end{equation}
since $\nabla H$ is symmetric by \eqref{sym1}. Multiply from the right by $P_i$ and from the left by $H_i^\dagger$ to get
\[(I-P_i)\nabla_q P_iP_i = -\kappa_i^2H_i^\dagger\nabla v^T q^T P_i = -\kappa_i\nabla v^T q^T P_i.\]
Next, $P_i = P_iP_i$ so $\nabla P_i = P_i\nabla P_i + \nabla_{P_i} P_i$, or $\nabla_{P_i} P_i = (I-P_i)\nabla P_i$. That is, $\nabla_{P_ir} P_iP_i = -\kappa_i\nabla v^T r^T P_i$ for all $r\in\mR^n$.  Multiply from the left by $q^T$ and use the symmetry rule \eqref{sym2}:
\[-\kappa_i(\nabla v q) r^T P_i = q^T\nabla_{P_ir} P_iP_i = r^TP_i\nabla_q P_iP_i.\]
The formula \eqref{eq:accel_lem} now follows by writing $\nabla P_i P_i = (P_i + I-P_i)\nabla P_i P_i$.

The identity \eqref{eq:accel_lemII} is obtained similarly by multiplying \eqref{eq:Hid} from the right by $P_j$ and from the left by $H_i^\dagger$.
\end{proof}

Since $P_jP_i = 0$ for $i\neq j$, we have that $P_j\nabla P_i + \nabla_{P_i}P_j = 0$. Using \eqref{sym2} yields
\begin{equation}
e^TP_j\nabla_q P_i = - q^T P_i\nabla_e P_j,\qquad\text{for all $i\neq j$ and all $q,e\in\mR^n$.}
\label{eq:qmin}
\end{equation}

\begin{proof}[Proof of the Proposition]
Let $i\in\{1,\dots,m\}$ and $j\in\{0,\dots,m\}$ with $i\neq j$.
Since $P_j\nabla P_i = -\nabla_{P_i}P_j$ and $i\neq 0$ we get from \eqref{eq:accel_lem} in the Lemma, when multiplying on the right with $P_j$, that
\begin{equation}
P_j\nabla_q P_iP_j = 0
\label{eq:qw}
\end{equation}
for all $q\in\mR^n$. Similarly, multiplying \eqref{eq:accel_lem} from the left with $P_j$ yields zero unless $j=0$. More precisely, and after transposing, we have that
\[P_i(\nabla_q P_i)^T P_j = -\delta_{j,0}\,\kappa_iP_iq\nabla v.\]
The derivative of the right-hand side matrix is
\[\nabla_e[-\delta_{j,0}\,\kappa_i P_iq\nabla v] = -\delta_{j,0}\,\kappa_i\Big\{P_iqe^TH + \nabla ve\cdot\nabla_q P_i\Big\} + \delta_{j,0}\,\kappa_i^2\nabla ve \cdot P_iq\nabla v.\]
Choose $q$ such that $P_iq = q$ and $|q| = 1$.
Then the above reduces to
\[P_i\nabla_e[-\delta_{j,0}\,\kappa_i P_iq\nabla v]P_j = \delta_{j,0}\,\kappa_i^2\nabla ve \cdot P_iq\nabla v\]
after multiplying from the left and right by $P_i$ and $P_j$, respectively. The term $P_i\nabla_q P_iP_j$ disappears by \eqref{eq:accel_lemII}.

For general $i,j,k$ the product rule yields
\[\nabla_e[P_k(\nabla_q P_i)^T P_j] = P_k\Big\{(\nabla_q P_i)^T \nabla_eP_j + \nabla_{P_je}(\nabla_q P_i)^T\Big\} + \nabla_s P_k\]
where $s^T$ is given by \eqref{eq:qmin}.
Therefore, by subtracting the transposed of the derivative of \eqref{eq:qw}, we are left with the identity
\begin{equation}
\begin{aligned}
\delta_{j,0}\,\kappa_i^2\nabla ve \cdot P_iq\nabla v
	&= P_i\Big\{ \nabla_e[P_i(\nabla_q P_i)^TP_j] - \left(\nabla_e[P_j(\nabla_q P_i)^TP_j]\right)^T \Big\}P_j\\
	&= P_i\Big\{ (\nabla_q P_i)^T\nabla_eP_j + \nabla_{P_je}(\nabla_q P_i)^T + \nabla_s P_i\\
	&\quad{} - (\nabla_e P_j)^T\nabla_qP_i -\nabla_{P_je}(\nabla_q P_i)^T - \left(\nabla_s P_j\right)^T \Big\}P_j\\
	&= P_i\Big\{ (\nabla_q P_i)^T\nabla_eP_j - (\nabla_e P_j)^T\nabla_qP_i + \nabla_s P_i - \left(\nabla_s P_j\right)^T \Big\}P_j
\end{aligned}
\label{eq:er}
\end{equation}
since $\nabla(\nabla P)^T$ is symmetric by \eqref{sym1}. Multiplying the last term with $q^T$ and and arbitrary $p$ gives
\[q^TP_i(\nabla_s P_j)^T P_jp = q^TP_i(\nabla_{P_jp} P_j)^T s = (\kappa_j - \kappa_i)q^TP_i\nabla_{P_jp} P_j H_j^\dagger s = (\kappa_i - \kappa_j)p^TP_j\nabla_{q} P_i H_j^\dagger s\]
by \eqref{eq:accel_lemII} and thus
\[q^TP_i(\nabla_s P_j)^T P_j = (\kappa_i-\kappa_j)s^TH_j^\dagger (\nabla_q P_i)^TP_j.\]
Similarly, considering the second to last term,
\[q^TP_i\nabla_s P_i P_j = s^T\nabla_q P_i P_j = (\kappa_i-\kappa_j)s^TH_i^\dagger (\nabla_q P_i)^T P_j.\]
Next, 
\begin{align*}
H_i^\dagger - H_j^\dagger
	&= \sum_{\substack{k=0\\ k\neq i}}^m\frac{P_k}{\kappa_i - \kappa_k} - \sum_{\substack{k=0\\ k\neq j}}^m\frac{P_k}{\kappa_j - \kappa_k}\\
	&= \frac{P_j}{\kappa_i - \kappa_j} - \frac{P_i}{\kappa_j - \kappa_i} + \sum_{\substack{k=0\\ i\neq k\neq j}}^m\frac{P_k}{\kappa_i - \kappa_k} - \frac{P_k}{\kappa_j - \kappa_k}\\
	&= \frac{P_i + P_j}{\kappa_i - \kappa_j} - (\kappa_i - \kappa_j)\sum_{\substack{k=0\\ i\neq k\neq j}}^m\frac{P_k}{(\kappa_i - \kappa_k)(\kappa_j - \kappa_k)}\\
	&= \frac{P_i + P_j}{\kappa_i - \kappa_j} - (\kappa_i - \kappa_j)H_i^\dagger H_j^\dagger,
\end{align*}
and it follows that
\begin{align*}
q^TP_i\Big\{ \nabla_s P_i - \left(\nabla_s P_j\right)^T \Big\}P_j
	&= (\kappa_i-\kappa_j)s^T\Big\{H_i^\dagger - H_j^\dagger\Big\} (\nabla_q P_i)^T P_j\\
	&= s^T\Big\{P_i + P_j - (\kappa_i-\kappa_j)^2H_i^\dagger H_j^\dagger\Big\} (\nabla_q P_i)^T P_j\\
	&= e^TP_j\nabla_q P_iP_i(\nabla_qP_i)^TP_j + 0\\
	&\quad{}+ (\kappa_i-\kappa_j)^2q^T P_i\nabla_e P_jH_i^\dagger H_j^\dagger(\nabla_q P_i)^T P_j\\
	&= \delta_{j,0}\,\kappa_i^2 \nabla v e\cdot \nabla v - q^T P_i(\nabla_e P_j)^T\nabla_q P_i P_j
\end{align*}
since $(\nabla_{P_je} P_j)^T = (\nabla_e P_j)^T(I-P_j)$, $H_i^\dagger$ and $H_j^\dagger$ commutes, and since
\[e^TP_j\nabla_q P_iP_i(\nabla_qP_i)^TP_j = \delta_{j,0}\,\kappa_i^2 e^T\nabla v^Tq^TP_i q \nabla v = \delta_{j,0}\,\kappa_i^2 \nabla ve\cdot \nabla v.\]
Multiplying \eqref{eq:er} with $q^T$ now produces some cancellations and we have shown that
\begin{equation}
0 = q^TP_i\Big\{ (\nabla_q P_i)^T\nabla_eP_j - 2(\nabla_e P_j)^T\nabla_qP_i\Big\}P_j
\label{eq:}
\end{equation}
for all $i\neq j$, $i\geq 1$, and $q$ such that $P_iq = q$, $|q|=1$. The vector $e$ is still arbitrary. Since the first term equals $\kappa_i\kappa_j e^TP_j$ by \eqref{eq:accel_lem} in the Lemma, and since the second term can be written as $q^TP_i(\nabla_e P_j)^T\nabla_qP_iP_j = (\kappa_j-\kappa_i)q^TP_i\nabla_e P_jH_j^\dagger\nabla_qP_iP_j = (\kappa_i-\kappa_j)e^TP_j\nabla_q P_iH_j^\dagger\nabla_qP_iP_j$, it follows that
\begin{equation}
\kappa_i\kappa_j P_j = 2(\kappa_i-\kappa_j)P_j\nabla_q P_iH_j^\dagger\nabla_qP_iP_j.
\label{eq:}
\end{equation}
Divide by $\kappa_i-\kappa_j$, take the trace and sum over $j$ different from $i$.
\begin{align*}
\sum_{\substack{j=0\\j\neq i}}^m \frac{\kappa_i\kappa_j}{\kappa_i-\kappa_j}\tr P_j
	&= 2\sum_{\substack{j=0\\j\neq i}}^m \tr\left(\nabla_q P_iH_j^\dagger\nabla_qP_iP_j\right)\\
	&= 2\sum_{\substack{j=0\\j\neq i}}^m \sum_{\substack{k=0\\k\neq j}}^m \frac{\tr\left(\nabla_q P_iP_k\nabla_qP_iP_j\right)}{\kappa_j-\kappa_k}\\
	&= 2\sum_{\substack{j,k=0\\ j\neq i\neq k\\ k\neq j}}^m \frac{\tr\left(\nabla_q P_iP_k\nabla_qP_iP_j\right)}{\kappa_j-\kappa_k}.
\end{align*}
The last equality is due to $P_i\nabla_qP_iP_j = 0$.
Since the grid of summation indexes is symmetric and since the summand is odd in $j$ and $k$ by the cyclic property of the trace, the sum is zero. Thus
\begin{equation}
\sum_{\substack{j=0\\ j\neq i}}^m d_j\frac{\kappa_i\kappa_j}{\kappa_i-\kappa_j} = 0,\qquad \forall i = 1,\dots,m,
\label{eq:cartan}
\end{equation}
where $d_j := \tr P_j$ is the multiplicity of the eigenvalue $\kappa_j$. This is known as \emph{Cartan's Fundamental Formula}.

Fix $i\geq 1$ so that $\kappa_i$ is the eigenvalue with smallest absolute value. Consider the indexes $j\geq 1$, $j\neq i$. If $\kappa_j<\kappa_i$, we have that $\kappa_i-\kappa_j>0$ and $\kappa_j<0$. Similarly, if $\kappa_j>\kappa_i$, then $\kappa_i-\kappa_j<0$ and $\kappa_j>0$. In either case
\[\frac{\kappa_j}{\kappa_i-\kappa_j} < 0\]
and \eqref{eq:cartan} is a contradiction unless $m=1$.
\end{proof}

\section{Integrating back and assembling the proof}

Let us recall the assumptions of the Theorem:
We have a $C^2$ function $u$ in a connected domain $\Omega\subseteq\mR^n$ and two functions $f$ and $g$ such that
\[
\begin{cases}
\lvert \nabla u\rvert = f(u),\\
\Delta u = g(u).
\end{cases}
\]
Furthermore, $f$ is positive and $C_1 := \Delta_1u(x_0)\geq 0$ at some point $x_0\in\Omega$.

Set $C_0 := u(x_0)$ and define
\[v(x) := F(u(x))\]
where
\[F(t) := \int_{C_0}^t\frac{\dd s}{f(s)},\qquad t\in \cI := u(\Omega),
\]
Note that $v(x_0) = 0$ and that $v\in C^2(\Omega)$ since $f$ is positive and $C^1$ by Proposition \ref{prop:fder}.
Also, $F'>0$ and $F$ has a $C^2$ inverse $U$ satisfying the relations
\begin{equation}
U'(s) = 1/F'(U(s)) = f(U(s)),\qquad U''(s) = f'(U(s))f(U(s)).
\label{eq:rel}
\end{equation}

Moreover,
\[\nabla v = F'(u)\nabla u = \frac{\nabla u}{f(u)}\]
and thus $|\nabla v|= 1$ in $\Omega$. This implies that $\nabla v\cH v = 0$ and the gradient is therefore a unit length eigenvector to the Hessian with corresponding eigenvalue 0. Next,
\begin{equation}
u(x) = U(v(x)),\qquad \nabla u = U'(v)\nabla v,\qquad \cH u = U''(v)\nabla v^T\nabla v + U'(v)\cH v,
\label{eq:next}
\end{equation}
and
\[\Delta v = \frac{g(U(v)) - U''(v)}{U'(v)} =: \tilde{g}(v).\]
In fact, we see that $\Delta_\infty^N u = \nabla v\cH u\nabla v^T = U''(v)$ and $U'(v)= f(u)$, so the Laplacian of $v$ is the 1-Laplacian of $u$:
\begin{equation}
\Delta_1 u := \frac{1}{|\nabla u|}\left(\Delta u - \Delta_\infty^N u\right) = \Delta v.
\label{eq:1laplap}
\end{equation}

By Proposition \ref{prop:eigenvalues are constant on level sets} $v$ is real-analytic in $\Omega$ and has a Hessian on the form
\[\Hv(x) = \sum_{i=0}^m\frac{c_i}{1 + c_iv(x)}P_i(x)\]
in a neighbourhood $\Omega'$ of $x_0$. The constants $c_i$ are distinct and $c_0 = 0$.
If $m=0$ then $\cH v = 0$ and $v$ is affine in $\Omega'$. By analyticity, it must be the same affine function in the whole of $\Omega$. That is,
\[v(x) = q^T(x-x_0)\]
for some unit length constant $q\in\mR^n$ since $v(x_0) = 0$ and $|\nabla v| = 1$. It follows that
\[u(x) = U(v(x)) = U(q^T(x-x_0)),\qquad x\in\Omega.\]
Finally, by \eqref{eq:next} and \eqref{eq:rel}, $g(u) = U''(v) = f'(u)f(u)$ which also implies that $g\in C(\cI)$. 


If $m\geq 1$, then
\[\cH v(x) = \kappa_1(x)P_1(x),\qquad \kappa_1(x) := \frac{c_1}{1 + c_1v(x)},\]
in $\Omega'$ by Proposition \ref{prop:mis1}.
By Lemma \ref{lem:M}, $\nabla_q P_1 P_1 = -\kappa_1(\nabla vq\cdot I + \nabla v^Tq^T)P_1$ and $\nabla_q P_0 P_0 = 0$. But since $P_0 + P_1 = I$ we get that $\nabla_q P_1 = - \nabla_q P_0$ and it follows that
$\nabla_q P_1  = \nabla_q P_1(P_0+P_1) = -\nabla_q P_0 P_0 + \nabla_q P_1P_1 = \nabla_q P_1P_1$. That is,
\[\nabla_q P_1 = -(\nabla vq\cdot I + \nabla v^Tq^T)\cH v.\]
Defining the one-rank projection $G := \nabla v^T\nabla v$, we find that also
\[\nabla_q G = \nabla[\nabla v q\cdot\nabla v^T] = \nabla v q\cdot\cH v + \nabla v^Tq^T\cH v\]
for all $q\in\mR^n$.
Thus, $\nabla[P_1+G] = 0$ and since $P_1G = 0$, 
\[P_1(x) + G(x) =: R_0\]
is a constant projection with rank $k := \tr R_0 = d_1 + 1 \geq 2$.

Consider next the vector field $\nabla v^T/\kappa_1$ in $\Omega'$. Its Jacobian matrix is precisely
\[\nabla\left[\frac{\nabla v^T}{\kappa_1}\right] = \frac{1}{\kappa_1}\cH v - \frac{-\kappa_1^2}{\kappa_1^2}\nabla v^T\nabla v = P_1 + G = R_0,\]
so $\nabla v^T/\kappa_1 = R_0x - x_*$ for some constant $x_*\in\mR^n$. Since $\nabla vR_0 = \nabla v G = \nabla v$, we may write
\[\frac{\nabla v^T(x)}{\kappa_1(x)} = R_0(x-x_*),\]
and taking the length then yields
\begin{equation}
|v(x) + 1/c_1| = \frac{1}{|\kappa_1(x)|} = |R_0(x-x_*)|.
\label{eq:semiexp}
\end{equation}

To find the correct sign, we now use the assumption $C_1 \geq 0$. Since $c_1\neq 0$ and
\begin{equation}
C_1 = \Delta_1 u(x_0) = \Delta v(x_0) = d_1\frac{c_1}{1 + c_1v(x_0)} = (k-1)c_1,
\label{eq:C1}
\end{equation}
it follows that $C_1>0$ and $c_1>0$. Thus, as claimed in the Theorem, $u$ is a function of an affine function if and only if $C_1=0$.

Going back to \eqref{eq:semiexp}, as $v$ is smooth and vanishes at $x=x_0$, the only possibility is
\[F(u(x)) = v(x) = |R_0(x-x_*)| - 1/c_1,\qquad x\in\Omega'\]
which again extends to $\Omega$ by analyticity.
Thus,
\[u(x) = U_k\left(|R_0(x-x_*)|\right),\qquad x\in\Omega,\]
where $U_k$ is the inverse of
\[F_k(t) := \frac{1}{c_1} + F(t) = \frac{k-1}{C_1} + \int^t_{C_0}\frac{\dd s}{f(s)}.\]
Note that $R_0(x-x_*)\neq 0$ for $x\in\Omega$ since $v$ would not be smooth otherwise. In particular, $x_*\notin\Omega$ and $\Omega$ must be a proper subset of $\mR^n$. Also, \eqref{eq:C1} implies that
$v$ is bounded below by $-1/c_1 = -(k-1)/C_1$ which is the negative distance from $x_0\in\Omega$ to the affine subspace $\{R_0(x-x_*) = 0\}$. Thus, $F_k > 0$.

Finally, we note that
\[\Delta v = d_1\frac{c_1}{1 + c_1v} = \frac{(k-1)c_1}{1 + c_1F(u)} = \frac{k-1}{F_k(u)},\]
and by \eqref{eq:rel} and \eqref{eq:next} we get the following relation between $g$ and $f$.
\[g(u) = \Delta u = U''(v) + U'(v)\Delta v = f(u)\left(f'(u) + \frac{k-1}{F_k(u)}\right).\]

This concludes the proof of the Theorem.

\paragraph{Acknowledgments:}
Supported by the Academy of Finland (grant SA13316965), and Aalto University.
I thank Professor Thorbergsson for pointing out the works \cite{MR2999038} and \cite{MR3735906}.

\bibliographystyle{alpha}
\bibliography{C:/Users/Karl_K/Documents/PhD/references}

\end{document}